   \definecolor{cites}{rgb}{0.75 , 0.00 , 0.00}  % colour for citations
   \definecolor{urls} {rgb}{0.00 , 0.00 , 1.00}  % colour for URL's
   \definecolor{links}{rgb}{0.00 , 0.00 , 0.5}   % colour for links
  \definecolor{gray}{rgb}{0.5,0.5,.5}
\newcommand{\C}{\mathbb{C}}
\newcommand{\N}{\mathbb{N}}
\newcommand{\eps}{\epsilon}
\renewcommand{\epsilon}{\varepsilon}
\newcommand{\vertiii}[1]{{\left\vert\kern-0.25ex\left\vert\kern-0.25ex\left\vert #1
    \right\vert\kern-0.25ex\right\vert\kern-0.25ex\right\vert}}
\newcommand{\vertiiis}[1]{{\vert\kern-0.25ex\vert\kern-0.25ex\vert #1
    \vert\kern-0.25ex\vert\kern-0.25ex\vert}}
\DeclareMathOperator{\VMO}{VMO}
\DeclareMathOperator{\Real}{Re}
\renewcommand{\Re}{\Real}
\DeclareMathOperator{\Imag}{Im}
\renewcommand{\Im}{\Imag}
\newcommand{\from}{\colon}
\providecommand{\scpr}[2]{\left\langle #1, #2 \right\rangle}
\renewcommand{\sp}{\scpr}
\providecommand{\abs}[1]{\left\lvert#1\right\rvert}
\providecommand{\norm}[1]{\left\lVert#1\right\rVert}
\providecommand{\set}[1]{\left\{ #1\right\}}
\newtheorem{thm}{Theorem}
\newtheorem{mthm}[thm]{Theorem}
\newtheorem{lem}[thm]{Lemma}
\newtheorem{prop}[thm]{Proposition}
\newtheorem{cor}[thm]{Corollary}
\newtheorem*{cor*}{Corollary}
\theoremstyle{definition}
\newtheorem{defn}[thm]{Definition}
\theoremstyle{remark}
\newtheorem{rem}[thm]{Remark}
\numberwithin{equation}{section}
\begin{document}
\title{\bf A Product Expansion for Toeplitz Operators on the Fock Space}
%\date{}
\author{Raffael Hagger\footnote{Institut f\"ur Analysis, Leibniz Universit\"at, 30167 Hannover, Germany, raffael.hagger@math.uni-hannover.de}}
\maketitle
\vspace{-0.4cm}
\begin{abstract}
We study the asymptotic expansion of the product of two Toeplitz operators on the Fock space. In comparison to earlier results we require significantly less derivatives and get the expansion to arbitrary order. This, in particular, improves a result of Borthwick related to Toeplitz quantization. In addition, we derive an intertwining identity between the Berezin star product and the sharp product.

\medskip
\textbf{AMS subject classification:} Primary: 47B35; Secondary: 46L65, 30H20

\medskip
\textbf{Keywords:} Toeplitz quantization, Fock space, asymptotic expansion
\end{abstract}

\section{Introduction} \label{introduction}

It is a long standing open question in the theory of Toeplitz operators to determine when the product of two Toeplitz operators is again a Toeplitz operator. For the Hardy space this was famously solved by Brown and Halmos \cite{BroHa}, basically saying that this only happens in trivial cases. On Bergman and Fock spaces the situation is different. For the Bergman space over the unit disc some examples are given in \cite{AhCu} for instance. For the Fock space there are plenty of examples. For polynomials $p$ and $q$ in $z$ and $\bar{z}$ on the $n$-particle Fock space it is well-known and easy to check that the following expansion holds:
\begin{equation} \label{product_expansion_poly}
T_pT_q = \sum\limits_{\alpha \in \N_0^n} \frac{(-1)^{\abs{\alpha}}}{\alpha!} T_{(\partial^\alpha p)(\bar{\partial}^\alpha q)},
\end{equation}
where we used the standard multi-index notations $\abs{\alpha} = \alpha_1 + \ldots + \alpha_n$ and $\partial^\alpha = \frac{\partial^{\abs{\alpha}}}{\partial z_1^{\alpha_1} \cdots \partial z_n^{\alpha_n}}$ (likewise for $\bar{\partial}^\alpha$). Note that for polynomials the above sum is finite and the right-hand side is again a Toeplitz operator. In fact, this expansion plays an important role in Rieffel's deformation quantization \cite{Rieffel}. Indeed, if we set $q_j = \Re(z_j) = \frac{z_j+\bar{z}_j}{2}$ and $p_j = \Im(z_j) = \frac{z_j-\bar{z}_j}{2i}$, Equation \eqref{product_expansion_poly} yields $[T_{p_j},T_{p_k}] = [T_{q_j},T_{q_k}] = 0$ and $[T_{p_j},T_{q_k}] = \frac{i}{2}\delta_{jk}$, where $[\cdot,\cdot]$ denotes the commutator of two operators and $\delta_{j,k}$ is the Kronecker delta. That is, the operators $T_{p_1}, \ldots, T_{p_n}, T_{q_1}, \ldots, T_{q_n}$ satisfy the canonical commutation relations (up to an irrelevant constant). Moreover, to first order in the expansion, we have
\[[T_f,T_g] = T_{i\{f,g\}} + \ldots,\]
where $\{f,g\}$ denotes the Poisson bracket of $f$ and $g$. This can be seen as an instance of the general principle ``replace commutators by Poisson brackets'' in quantization. In this sense Toeplitz operators serve as a realization of Rieffel's deformation quantization. It is therefore an interesting question for which symbols (other than polynomials) the expansion \eqref{product_expansion_poly} holds. To make this a little bit more precise, let us give some definitions first before we proceed with known results on the matter.

Let $\set{\mu_t}_{t > 0}$ denote the family of Gaussian probability measures given by
\[\mathrm{d}\mu_t(z) = \frac{1}{(\pi t)^ n}e^{-\frac{\abs{z}^2}{t}} \, \mathrm{d}v(z),\]
where $\mathrm{d}v$ denotes the standard Lebesgue volume form on $\C^n$. The Fock space $H^2_t$ is then defined as
\[H^2_t := \set{f \from \C^n \to \C : f \text{ is entire and } \int_{\C^n} \abs{f}^2 \, \mathrm{d}\mu_t < \infty}.\]
An orthonormal basis of $H^2_t$ is given by the monomials $e_{\alpha}^{(t)}(z) := \frac{z^{\alpha}}{\sqrt{\alpha !t^{\abs{\alpha}}}}$ ($\alpha \in \N_0^n$). As $H^2_t$ is a closed subspace of $L_t^2 := \set{f \from \C^n \to \C : \int_{\C^n} \abs{f}^2 \, \mathrm{d}\mu_t < \infty}$, there exists an orthogonal projection $P^{(t)} \from L^2_t \to H^2_t$. More explicitly, we have
\[[P^{(t)}f](z) = \int_{\C^n} f(w)e^{\frac{\sp{z}{w}}{t}} \, \mathrm{d}\mu_t.\]
For bounded functions $f$, called symbols, we consider the corresponding Toeplitz operator $T_f^{(t)} := P^{(t)}M_f \from H^2_t \to H^2_t$, where $M_f$ denotes the operator of multiplication by $f$. If we factor in the deformation (or weight) parameter $t$, the expansion \eqref{product_expansion_poly} reads as
\begin{equation} \label{product_expansion}
T_f^{(t)}T_g^{(t)} = \sum\limits_{\alpha \in \N_0^n} \frac{(-t)^{\abs{\alpha}}}{\alpha!} T_{(\partial^\alpha f)(\bar{\partial}^\alpha g)}^{(t)}.
\end{equation}
Comparing with the canonical commutation relations, we see that $t$ can be interpreted as the reduced Planck constant $\hbar$. For general symbols $f$ and $g$ this sum may not converge in any meaningful way. To make sense of it, we will look at it as an asymptotic expansion, that is, for fixed $k \in \N$ we ask whether
\begin{equation} \label{asymptotic_expansion}
\lim\limits_{t \to 0} \frac{1}{t^k}\norm{T_f^{(t)}T_g^{(t)} - \sum\limits_{\abs{\alpha} \leq k} \frac{(-t)^{\abs{\alpha}}}{\alpha!} T_{(\partial^\alpha f)(\bar{\partial}^\alpha g)}^{(t)}} = 0.
\end{equation}
For $k = 0$ and $k = 1$ there are several results: Coburn \cite{Coburn} showed that \eqref{asymptotic_expansion} holds if $f$ and $g$ are the sum of trigonometric polynomials and $(2n+6)$-times continuously differentiable functions with compact support. This was extented by Borthwick \cite{Borthwick} to symbols that have $4n+6$ bounded derivatives but possibly unbounded support. For $k = 0$ it was shown in \cite{BaCo} that \eqref{asymptotic_expansion} even holds for bounded uniformly continuous functions. Moreover, a counterexample was given to show that
\begin{equation} \label{asymptotic_expansion_zero}
\lim\limits_{t \to 0} \norm{T_f^{(t)}T_g^{(t)} - T_{fg}^{(t)}} = 0
\end{equation}
(i.e.~\eqref{asymptotic_expansion} for $k = 0$) does not hold for general $f$ and $g$. Recently these results were extended further in \cite{BaCoHa} as follows. \eqref{asymptotic_expansion_zero} holds for arbitrary uniformly continuous functions $f$ and $g$ even if the corresponding Toeplitz operators are unbounded (the semi-commutator in \eqref{asymptotic_expansion_zero} is automatically bounded). Moreover, one may choose one of the two functions $f$ or $g$ to be an arbitrary, possibly discontinuous, bounded function. The uniform continuity may also be replaced by bounded $\VMO$ and this algebra is the largest $C^*$-algebra with this property (see \cite{BaCoHa} for definitions and the precise statement). To round things up, an example of two bounded continuous functions with high oscillation was given, where \eqref{asymptotic_expansion_zero} is violated. This suggests that high oscillation somewhat prohibits \eqref{asymptotic_expansion}. However, this would need some more elaboration. It is worth mentioning that Toeplitz quantization has also been studied on different domains like the unit ball, pseudoconvex domains or compact symplectic manifolds and similar results have been obtained under varying differentiability assumptions \cite{BaMaMaPi,BaHaVa,Berezin2,Berezin3,BoLeUp,Englis,EnUp,EnUp2,KliLe,MaMa}.

Bauer \cite[Theorem 16]{Bauer} showed that the series \eqref{product_expansion} converges to a bounded Toeplitz operator in case $f$ and $g$ are in the range of the heat transform. The heat transform (often also called Berezin transform \cite{Berezin2}) $f^{(t)}$ of a function $f \in L^{\infty}(\C^n)$ is defined as
\[f^{(t)} \from \C^n \to \C, \quad f^{(t)}(z) := \frac{1}{(t\pi)^n} \int_{\C^n} f(w)e^{-\frac{1}{t}\abs{w-z}^2} \, \mathrm{d}w = \frac{1}{\pi^n} \int_{\C^n} f(\sqrt{t}w + z)e^{-\abs{w}^2} \, \mathrm{d}w.\]
Even though this formula of Bauer is an impressive result, all the above results for $k \geq 1$ are a little bit unsatisfactory as there is always the need of a ridiculous amount of derivatives, where the expression \eqref{asymptotic_expansion} only needs $k$ derivatives to make sense syntactically. For instance, in case $n = 1$ Borthwick \cite{Borthwick} needs 10 bounded derivatives, where the formula itself only involves one single derivative. This is exactly the motivation for this paper. We give a quick and elementary proof that at most $2k$ bounded uniformly continuous derivatives are needed for \eqref{asymptotic_expansion}. The set of functions which are $2k$-times continuously differentiable and for which all partial derivatives up to order $2k$ are bounded and uniformly continuous will be denoted by $C^{2k}_{buc}(\C^n)$. For $k = 0$, these are just the bounded uniformly continuous functions.

\begin{mthm} \label{mthm}
Let $k \geq 0$ and $f,g \in C^{2k}_{buc}(\C^n)$. Then the following product expansion holds:
\[T_f^{(t)}T_g^{(t)} = \sum\limits_{\substack{\alpha \in \N_0^n \\ \abs{\alpha} \leq k}} \frac{(-t)^{\abs{\alpha}}}{\alpha !} T_{(\partial^{\alpha}f)(\bar{\partial}^{\alpha}g)}^{(t)} + o(t^k)\]
in the operator norm sense as $t \to 0$.
\end{mthm}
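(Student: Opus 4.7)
My strategy is to analyze $T_f^{(t)}T_g^{(t)}$ by integrating out the middle variable in its double-integral representation, Taylor-expanding $f$ in the resulting Gaussian, and exploiting a combinatorial cancellation to match the target expansion.

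For $h \in H^2_t$,
\[(T_f^{(t)} T_g^{(t)} h)(z) = \iint f(u)\, g(w)\, h(w)\, K(z, u)\, K(u, w) \, d\mu_t(u)\, d\mu_t(w),\]
where $K(\zeta,\eta) := e^{\scpr{\zeta}{\eta}/t}$. The substitution $u = w + s$ together with the identity $\scpr{z}{u} + \scpr{u}{w} - |u|^2 = \scpr{z}{w} + \scpr{z-w}{s} - |s|^2$ yields
\[\int_{\C^n} f(u)\, K(z, u)\, K(u, w)\, d\mu_t(u) = K(z, w)\int_{\C^n} f(w + s)\, e^{\scpr{z-w}{s}/t}\, d\mu_t(s).\]
Now Taylor-expand $f(w + s)$ to total order $2k$ at $s = 0$ in the variables $s^\alpha \bar s^\beta$, expand $e^{\scpr{z-w}{s}/t} = \sum_\gamma (z-w)^\gamma \bar s^\gamma / (\gamma!\, t^{|\gamma|})$, and use the monomial orthogonality $\int s^\alpha \bar s^\mu\, d\mu_t(s) = \delta_{\alpha\mu}\, \alpha!\, t^{|\alpha|}$ to evaluate the inner $s$-integral as
\[\sum_{\beta,\gamma}\frac{t^{|\beta|}}{\beta!\, \gamma!}(z - w)^\gamma (\partial^{\beta+\gamma}\bar\partial^\beta f)(w) + R(z, w, t),\]
with $R$ the Taylor-remainder contribution.

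Insert this expression into the outer $w$-integral against $g(w) h(w) K(z, w)\, d\mu_t(w)$ and use the Stokes identity
\[\int (z - w)^\gamma \varphi(w) K(z, w)\, d\mu_t(w) = (-t)^{|\gamma|}\int (\bar\partial^\gamma \varphi)(w)\, K(z, w)\, d\mu_t(w),\]
which follows from integration by parts in $\bar w$ against the Gaussian density (no boundary terms). Since $\bar\partial h = 0$, Leibniz distributes $\bar\partial^\gamma$ only between $g$ and $\partial^{\beta+\gamma}\bar\partial^\beta f$, producing a triple sum (indexed by $\beta, \gamma_1, \gamma_2$ with $\gamma = \gamma_1 + \gamma_2$) of Toeplitz operators with symbol $(\partial^{\beta+\gamma_1+\gamma_2}\bar\partial^{\beta+\gamma_2} f)(\bar\partial^{\gamma_1} g)$. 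Collecting contributions to a fixed target symbol $(\partial^A\bar\partial^B f)(\bar\partial^C g)$ (necessarily $A = B+C$, with $\gamma_1 = C$ and $\beta = B - \gamma_2$), the coefficient factors through the sum $\sum_{\gamma_2 \leq B}(-1)^{|\gamma_2|}\binom{B}{\gamma_2} = \prod_j(1 - 1)^{B_j}$, which vanishes unless $B = 0$. The surviving $B = 0$ terms reproduce $\frac{(-t)^{|C|}}{C!} T^{(t)}_{(\partial^C f)(\bar\partial^C g)}$ for $|C|\leq 2k$; the excess terms with $|C| > k$ together with the uncancelled pieces arising from truncating the $\gamma_2$-sum (when $2|B|+|C| > 2k$) all carry $t$-order strictly greater than $k$ and hence contribute $o(t^k)$.

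The main technical obstacle is to show that the Taylor remainder $R$ itself contributes $o(t^k)$ in operator norm. After rescaling $s = \sqrt{t}\, s'$ the Gaussian becomes $t$-independent, and for $f \in C^{2k}_b(\C^n)$ Taylor's theorem gives $|R(w,\sqrt t\, s')| \leq t^k |s'|^{2k}\, \omega_f(\sqrt t\, |s'|)$, where $\omega_f$ is the modulus of continuity of the $2k$-th partial derivatives of $f$. The sharper $o(t^k)$ rate, as opposed to $O(t^k)$, relies on the continuity (not merely boundedness) of those top derivatives. To upgrade the resulting pointwise-in-$w$ smallness to a uniform operator-norm bound, I would reapply the Stokes identity above to the remainder itself, expressing it as an integral operator whose kernel equals $o(t^k)$ times manageable factors in $(z, w)$, and close the estimate with a density argument if $\partial^{2k}f$ is not globally uniformly continuous on $\C^n$.
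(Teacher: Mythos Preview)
Your direct kernel approach differs genuinely from the paper's, which instead exploits Bauer's identity $T_{f^{(t)}}^{(t)}T_{g^{(t)}}^{(t)}=T_{f^{(t)}\sharp_t g^{(t)}}^{(t)}$ (Proposition~\ref{thm_BaCo}) together with the Taylor expansions of $f^{(t)}$ and $f^{(t)}\sharp_t g^{(t)}$ (Lemmas~\ref{lem1} and~\ref{lem2}) and runs a strong induction on the expansion order, invoking the $k=0$ semi-commutator result (Proposition~\ref{thm_BaCoHa}) at each step. The binomial cancellation $\sum_{\gamma_2 \leq B}(-1)^{|\gamma_2|}\binom{B}{\gamma_2}=\delta_{B,0}$ you isolate is correct and is morally why only $B=0$ survives.

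However, there is a real gap in the derivative budget. After the inner $s$-integral each polynomial term is indexed by $\alpha\geq\beta$ with $|\alpha|+|\beta|\leq 2k$ and carries $t^{|\beta|}(z-w)^{\alpha-\beta}$ together with the coefficient $(\partial^\alpha\bar\partial^\beta f)(w)$. Your Stokes step turns $(z-w)^{\alpha-\beta}$ into $(-t)^{|\alpha-\beta|}\bar\partial_w^{\alpha-\beta}$ acting on $(\partial^\alpha\bar\partial^\beta f)\,g\,h$; Leibniz then places up to $|\alpha-\beta|$ further $\bar\partial$'s on $f$, for a total of $2|\alpha|$ derivatives. When $k<|\alpha|\leq 2k$ (e.g.\ the pure holomorphic Taylor terms $\beta=0$) this exceeds $2k$, so the integration by parts is not available and your ``uncancelled pieces'' with $2|B|+|C|>2k$ are Toeplitz operators whose symbols $(\partial^A\bar\partial^B f)(\bar\partial^C g)$ are not even defined for $f\in C^{2k}_b$; you cannot dismiss them by their $t$-power. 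Without Stokes those terms only carry $t^{|\beta|}$ (possibly $t^0$), so their smallness is not apparent. The remainder has the same defect: after $s=\sqrt{t}s'$ the Gaussian becomes $d\mu_1(s')$, but the factor $e^{\langle z-w,s\rangle/t}$ becomes $e^{\langle z-w,s'\rangle/\sqrt{t}}$, which blows up as $t\to 0$ for $z\neq w$; the pointwise Taylor bound on $R$ therefore does not control the inner integral, and ``reapplying Stokes'' to the remainder again costs extra derivatives of $f$. As written your scheme effectively needs $f\in C^{4k}_b$, which is precisely the over-count the theorem is meant to avoid. The paper sidesteps this by passing to the heat transforms $f^{(t)},g^{(t)}$, which are $C^\infty$ regardless of $k$, so all Taylor expansions there are against a fixed Gaussian with no growing exponential and no derivative shortage.
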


As the Poisson bracket in complex coordinates is given by $\{f,g\} = i\sum\limits_{j = 1}^n \partial_j f \bar{\partial}_j g - \bar{\partial}_j f \partial_j g$, this implies the following corollary, which directly improves the result of Borthwick \cite[Theorem 4.5]{Borthwick}:

\begin{cor} \label{cor}
Let $f,g \in C^2_{buc}(\C^n)$. Then
\[\norm{[T_f^{(t)},T_g^{(t)}] - itT_{\{f,g\}}^{(t)}} = o(t)\]
as $t \to 0$.
\end{cor}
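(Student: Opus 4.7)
The plan is to apply Theorem \ref{mthm} twice with $k = 1$, once to $T_f^{(t)} T_g^{(t)}$ and once to $T_g^{(t)} T_f^{(t)}$, and then subtract. Since $f, g \in C^2_b(\C^n)$, both expansions are available and yield
\[T_f^{(t)} T_g^{(t)} = T_{fg}^{(t)} - t \sum_{j=1}^n T_{(\partial_j f)(\bar\partial_j g)}^{(t)} + o(t),\]
\[T_g^{(t)} T_f^{(t)} = T_{gf}^{(t)} - t \sum_{j=1}^n T_{(\partial_j g)(\bar\partial_j f)}^{(t)} + o(t).\]
Subtracting and using $fg = gf$ gives
\[[T_f^{(t)}, T_g^{(t)}] = -t \sum_{j=1}^n T_{(\partial_j f)(\bar\partial_j g) - (\partial_j g)(\bar\partial_j f)}^{(t)} + o(t).\]

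Next I would match this against the Poisson bracket formula $\{f,g\} = i \sum_{j=1}^n \bigl( \partial_j f\, \bar\partial_j g - \bar\partial_j f\, \partial_j g \bigr)$ recalled in the excerpt. Multiplying by $i$ produces a factor of $-1$, so
\[it\, T_{\{f,g\}}^{(t)} = -t \sum_{j=1}^n T_{\partial_j f\, \bar\partial_j g - \bar\partial_j f\, \partial_j g}^{(t)},\]
and since $\partial_j g\, \bar\partial_j f = \bar\partial_j f\, \partial_j g$ as ordinary products of functions, this matches the expression above for $[T_f^{(t)}, T_g^{(t)}]$ up to the $o(t)$ remainder. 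The corollary follows immediately by the triangle inequality.

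There is essentially no obstacle here beyond bookkeeping: the whole point of the corollary is that Theorem \ref{mthm} is engineered so that a single derivative in the Poisson bracket matches the minimal differentiability cost $k = 1$, and hence $2k = 2$ bounded derivatives suffice. The only thing that could go wrong would be a sign or factor-of-$i$ error in the identification of the Poisson bracket, so I would carefully double-check the complex-coordinate convention and the placement of $i$ before declaring victory.
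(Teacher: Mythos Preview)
Your proposal is correct and is precisely the argument the paper has in mind: the corollary is stated immediately after Theorem~\ref{mthm} with the remark that it follows directly from the $k=1$ case together with the complex-coordinate form of the Poisson bracket, and the paper gives no further proof. Your bookkeeping of the signs and the factor of $i$ is accurate.
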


An example in \cite{BaCoHa} shows that at least in the case $k = 0$ some control of the oscillation is needed. However, it is not clear whether uniform continuity or even more than $k$ derivatives are necessary for Theorem \ref{mthm} if $k \geq 1$.

We also derive the following interesting identity. It can be understood as an intertwining between the sharp product and the Berezin star product $f \ast_B g := \sum\limits_{\alpha \in \N_0^n} \frac{t^{\abs{\alpha}}}{\alpha !} (\bar{\partial}^{\alpha}f)(\partial^{\alpha}g)$ by the heat transform (see \cite[Section 4]{EnglisBook}).

\begin{mthm} \label{mthm2}
Let $k \geq 0$ and $f,g \in C^{2k}_{buc}(\C^n)$. Then the following identity holds:
\[\sum\limits_{\substack{\alpha \in \N_0^n \\ \abs{\alpha} \leq k}} \frac{(-t)^{\abs{\alpha}}}{\alpha !} \left((\partial^{\alpha}f)(\bar{\partial}^{\alpha}g)\right)^{(t)} = \sum\limits_{\substack{\alpha \in \N_0^n \\ \abs{\alpha} \leq k}} \frac{t^{\abs{\alpha}}}{\alpha !} (\bar{\partial}^{\alpha}f^{(t)})(\partial^{\alpha}g^{(t)}) + o(t^k)\]
uniformly on $\C^n$ as $t \to 0$.
\end{mthm}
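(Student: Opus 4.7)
My plan is to deduce Theorem \ref{mthm2} from Theorem \ref{mthm} via the Berezin transform. Let $k_z^{(t)}$ denote the normalized reproducing kernel of $H^2_t$ at $z$ and $\widetilde{A}(z) := \sp{A k_z^{(t)}}{k_z^{(t)}}$ the Berezin transform of a bounded operator $A \in \Bc(H^2_t)$. Two standard facts will be used repeatedly: $\norm{\widetilde{A}}_\infty \leq \norm{A}$, and $\widetilde{T_h^{(t)}} = h^{(t)}$ for every bounded symbol $h$. Applying $\widetilde{\phantom{A}}$ to the operator identity of Theorem \ref{mthm} therefore yields, uniformly in $z \in \C^n$,
\[\widetilde{T_f^{(t)} T_g^{(t)}}(z) = \sum_{|\alpha|\leq k}\frac{(-t)^{|\alpha|}}{\alpha!}\bigl((\partial^\alpha f)(\bar\partial^\alpha g)\bigr)^{(t)}(z) + o(t^k),\]
identifying the left-hand side of Theorem \ref{mthm2} with $\widetilde{T_f^{(t)} T_g^{(t)}}(z)$ up to $o(t^k)$. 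Theorem \ref{mthm2} is thus equivalent to the complementary expansion
\[\widetilde{T_f^{(t)} T_g^{(t)}}(z) = \sum_{|\alpha|\leq k}\frac{t^{|\alpha|}}{\alpha!}(\bar\partial^\alpha f^{(t)})(z)(\partial^\alpha g^{(t)})(z) + o(t^k).\]

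To prove this second identity I would first derive the double-Gaussian integral representation
\[\widetilde{T_f^{(t)} T_g^{(t)}}(z) = \frac{1}{\pi^{2n}}\int_{\C^n}\int_{\C^n} f(z+\sqrt t u)\, g(z+\sqrt t v)\, e^{-|u|^2-|v|^2+\sp{u}{v}}\, du\, dv\]
by writing $\widetilde{T_f^{(t)} T_g^{(t)}}(z) = \sp{T_g^{(t)} k_z^{(t)}}{T_{\bar f}^{(t)} k_z^{(t)}}$, substituting the explicit form of the reproducing kernels, and completing the square in the resulting Gaussian. Since $\partial$ and $\bar\partial$ commute with the heat transform, the factor $(\bar\partial^\alpha f^{(t)})(z)(\partial^\alpha g^{(t)})(z)$ simultaneously admits the decoupled integral representation $\pi^{-2n}\int\int (\bar\partial^\alpha f)(z+\sqrt t u)(\partial^\alpha g)(z+\sqrt t v) e^{-|u|^2-|v|^2}\, du\, dv$. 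Taylor-expanding $f$, $g$ and their derivatives around $z$ to order $2k$ and performing the Gaussian integrations term by term reduces the identity to a combinatorial statement on the coefficients of the derivative monomials $(\partial^\mu \bar\partial^\nu f)(z)\cdot(\partial^\rho \bar\partial^\sigma g)(z)$. This combinatorial identity is most cleanly verified by appealing to the polynomial case of equation \eqref{product_expansion}: for polynomial $f$ and $g$ the identity $T_f^{(t)} T_g^{(t)} = T_{f\sharp g}^{(t)}$ is exact, and applying $\widetilde{\phantom{A}}$ expresses $\widetilde{T_f^{(t)}T_g^{(t)}}$ as two identical polynomial expressions in $z$, $\bar z$ and $t$; since polynomial symbols realize arbitrary derivative data at $z$, the coefficient identity follows in general.

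The principal technical obstacle is upgrading the naive $O(t^k)$ bound on the Taylor remainders to the required $o(t^k)$. From $f, g \in C^{2k}_b$ the integral form of Taylor's remainder only gives $|f(z+h) - P^f_{2k}(z,h)| = O(|h|^{2k})$ uniformly, and straight Gaussian integration of such an estimate yields $O(t^k)$. The sharpening to $o(t^k)$ is achieved by splitting the Gaussian integrations into a compact region $\{|u|,|v|\leq R\}$ (where the Peano form of the remainder, together with uniform continuity of the top-order derivatives on compacta, produces a genuine little-$o$ bound) and its complement (where the Gaussian tails dominate any polynomial growth of the remainder), and then sending $R \to \infty$ appropriately. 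This is the same circle of ideas that underlies the $o(t^k)$ remainder estimate in Theorem \ref{mthm} itself.
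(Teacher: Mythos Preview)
Your overall strategy is sound and takes a genuinely different route from the paper. The paper proves Theorem~\ref{mthm2} by a direct computation: it Taylor-expands the heat transform on the left-hand side, reduces the resulting five-index sum via a binomial-type identity (Lemma~\ref{lem3}), and then reassembles the pieces into the right-hand side using Lemma~\ref{lem1}. Theorem~\ref{mthm} and the operator Berezin transform play no role. Your approach instead factors both sides through the common quantity $\widetilde{T_f^{(t)}T_g^{(t)}}$, which is conceptually attractive because it makes the ``intertwining'' interpretation of the theorem transparent, and it trades the paper's explicit combinatorial lemma for structural facts about Toeplitz operators. One small cost is that your argument, unlike the paper's, is not self-contained: it consumes Theorem~\ref{mthm} (and hence only covers $k\geq 1$).

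There is, however, a genuine gap in your verification of the combinatorial identity. From $T_fT_g=T_{f\sharp g}$ for polynomials and the Berezin transform you obtain $\widetilde{T_fT_g}=(f\sharp g)^{(t)}$, which is the \emph{left}-hand side of Theorem~\ref{mthm2}; what you actually need at this point is that $\widetilde{T_fT_g}$ equals the \emph{right}-hand side, namely $\sum_\alpha \tfrac{t^{|\alpha|}}{\alpha!}(\bar\partial^\alpha f^{(t)})(\partial^\alpha g^{(t)})$. Equating the two is precisely Theorem~\ref{mthm2} for polynomials, so the argument as written is circular. The fix is short and in fact streamlines your whole second step: in your double integral, expand $e^{\langle u,v\rangle}=\sum_\alpha u^\alpha\bar v^\alpha/\alpha!$, use $u^\alpha e^{-|u|^2}=(-\bar\partial_u)^\alpha e^{-|u|^2}$ and $\bar v^\alpha e^{-|v|^2}=(-\partial_v)^\alpha e^{-|v|^2}$, and integrate by parts. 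This yields the \emph{exact} identity
\[
\widetilde{T_f^{(t)}T_g^{(t)}}(z)=\sum_{\alpha\in\N_0^n}\frac{t^{|\alpha|}}{\alpha!}\bigl(\bar\partial^\alpha f^{(t)}\bigr)(z)\bigl(\partial^\alpha g^{(t)}\bigr)(z)
\]
for arbitrary bounded $f,g$, with no Taylor expansion and no appeal to polynomials. It then only remains to bound the tail $|\alpha|>k$: the terms with $k<|\alpha|\leq 2k$ are $O(t^{k+1})$ directly since $\bar\partial^\alpha f,\partial^\alpha g$ are bounded, and the terms with $|\alpha|>2k$ are handled by writing $\bar\partial^\alpha f^{(t)}=\bar\partial^{\alpha-\beta}\bigl((\bar\partial^\beta f)^{(t)}\bigr)$ with $|\beta|=2k$ and using the elementary heat-kernel bound $\|\bar\partial^\gamma h^{(t)}\|_\infty\leq C_\gamma t^{-|\gamma|/2}\|h\|_\infty$; the $1/\alpha!$ then gives a summable $O(t^{2k})$ contribution. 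With this repair your route is arguably cleaner than the paper's, at the price of depending on Theorem~\ref{mthm}.
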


The outline of this paper is as follows. In Section 2 and 3 we give a proof of Theorem \ref{mthm} and Theorem \ref{mthm2}, respectively. In Section 4 we list some open problems which are related to this paper.

\section{Proof of Theorem 1}

The proof of Theorem \ref{mthm} only relies on Bauer's result and an ordinary Taylor expansion in complex coordinates. Let us define the following family of sharp products of $C^{\infty}$-functions.

\begin{defn} \label{definition}
Let $t > 0$. For $f,g \in C^{\infty}(\C^n)$ we define the formal power series
\begin{equation} \label{sharp_product}
f \sharp_t g := \sum\limits_{\alpha \in \N_0^n} \frac{(-t)^{\abs{\alpha}}}{\alpha !} (\partial^{\alpha}f)(\bar{\partial}^{\alpha}g).
\end{equation}
\end{defn}

Note that this is only a formal series and no convergence is guaranteed. However, by the mentioned result of Bauer \cite[Theorem 16]{Bauer}, for functions in the range of the heat transform the series converges and we have:

\begin{prop} \label{thm_BaCo}
(\cite[Theorem 1]{Bauer,BaCo})\\
Let $f,g \in L^{\infty}(\C^n)$. Then the power series $[f^{(t)} \sharp_t g^{(t)}](z)$ converges for all $z \in \C^n$ to
\[[f^{(t)} \sharp_t g^{(t)}](z) = \frac{1}{\pi^{2n}} \int_{\C^n} \int_{\C^n} f(\sqrt{t}v+z)g(\sqrt{t}w+z)e^{-\bar{v}w-\abs{v}^2-\abs{w}^2} \, \mathrm{d}v \, \mathrm{d}w.\]
Moreover, $f^{(t)} \sharp_t g^{(t)}$ is bounded and $T_{f^{(t)}}^{(t)}T_{g^{(t)}}^{(t)} = T_{f^{(t)} \sharp_t g^{(t)}}^{(t)}$.
\end{prop}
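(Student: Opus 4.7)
My plan is to derive the closed-form integral representation for $[f^{(t)} \sharp_t g^{(t)}](z)$ directly---which simultaneously yields pointwise convergence of the series and boundedness of the limit---and then to obtain the operator identity by testing against coherent states. The formal skeleton is clear: every derivative of $f^{(t)}$ or $g^{(t)}$ can be pulled inside the Gaussian defining the heat transform, so each $\alpha$-term of the series becomes an explicit double integral and the sum over $\alpha$ amounts to Taylor-expanding the Gaussian kernel $e^{-\bar v\cdot w}$.

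First I would differentiate under the integral in $f^{(t)}(z) = \pi^{-n}\int f(\sqrt{t}v+z)e^{-\abs{v}^2}\,dv$ (each $\partial_{z_j}$ acting on $e^{-\abs{w-z}^2/t}$ produces $(\bar w_j - \bar z_j)/t$, which becomes $\bar v_j/\sqrt{t}$ after the standard rescaling), giving
\[
\partial^\alpha f^{(t)}(z) = \frac{1}{\pi^n t^{\abs{\alpha}/2}}\int_{\C^n} f(\sqrt{t}v+z)\,\bar v^\alpha e^{-\abs{v}^2}\,dv,
\]
and analogously $\bar\partial^\alpha g^{(t)}(z) = \pi^{-n}t^{-\abs{\alpha}/2}\int g(\sqrt{t}w+z)w^\alpha e^{-\abs{w}^2}\,dw$. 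The combined factor $t^{-\abs{\alpha}}$ cancels the $(-t)^{\abs{\alpha}}$ appearing in \eqref{sharp_product} exactly, so the $\alpha$-term becomes
\[
\frac{1}{\pi^{2n}}\iint f(\sqrt{t}v+z)g(\sqrt{t}w+z)\,\frac{(-\bar v\cdot w)^\alpha}{\alpha!}\,e^{-\abs{v}^2-\abs{w}^2}\,dv\,dw,
\]
with the multi-index convention $(-\bar v\cdot w)^\alpha := \prod_j(-\bar v_j w_j)^{\alpha_j}$. Summing gives $\sum_\alpha (-\bar v\cdot w)^\alpha/\alpha! = e^{-\bar v\cdot w}$.

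To pull the sum through the integral I would invoke dominated convergence. The partial sums are controlled by
\[
\sum_{\abs{\alpha}\le N} \frac{\abs{v}^\alpha \abs{w}^\alpha}{\alpha!} \le \prod_j e^{\abs{v_j}\abs{w_j}} = e^{\sum_j\abs{v_j}\abs{w_j}} \le e^{\abs{v}\abs{w}} \le e^{(\abs{v}^2+\abs{w}^2)/2},
\]
so the integrand is majorised uniformly in $N$ and $t$ by $\norm{f}_\infty \norm{g}_\infty e^{-(\abs{v}^2+\abs{w}^2)/2}$, which is integrable. This yields at once the pointwise formula asserted in the proposition and the uniform bound $\norm{f^{(t)}\sharp_t g^{(t)}}_\infty \le C_n\norm{f}_\infty\norm{g}_\infty$.

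For the operator identity $T_{f^{(t)}}^{(t)}T_{g^{(t)}}^{(t)} = T_{f^{(t)}\sharp_t g^{(t)}}^{(t)}$ I would test both sides on the total family of normalised reproducing kernels $k_a^{(t)}$ and compute $\scpr{T_{f^{(t)}}^{(t)}T_{g^{(t)}}^{(t)} k_a^{(t)}}{k_b^{(t)}}$ by inserting the integral representations of $f^{(t)}$ and $g^{(t)}$ and evaluating the resulting four-fold Gaussian integral; two of these Gaussians can be carried out explicitly, and what remains is precisely the integrand appearing in the closed form of $f^{(t)}\sharp_t g^{(t)}$ derived above. I expect this last step to be the main obstacle: each individual Gaussian integration is routine, but tracking the phases carefully enough to recover the kernel $e^{-\bar v\cdot w}$ from the raw Toeplitz definition is exactly the delicate content of Bauer's Theorem 16. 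Once the identity is in hand, the preceding steps extract the symbol.
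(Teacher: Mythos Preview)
The paper does not supply a proof of this proposition at all; it is quoted verbatim from \cite{Bauer,BaCo} and used as a black box. So there is nothing in the paper to compare your argument against.

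That said, your derivation of the integral formula and of the uniform bound is correct and is essentially the computation carried out in \cite{Bauer}: differentiating the heat kernel in the unshifted variable produces the monomial weights $\bar v^\alpha$ and $w^\alpha$, the factors $t^{\abs{\alpha}}$ cancel, and the Gaussian majorant $e^{-(\abs{v}^2+\abs{w}^2)/2}$ justifies the interchange of summation and integration by dominated convergence. The bound $\norm{f^{(t)}\sharp_t g^{(t)}}_\infty \le C_n\norm{f}_\infty\norm{g}_\infty$ falls out immediately.

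For the operator identity you are right that this is where the real work lies, and you are honest that you are only sketching. One practical remark: rather than testing on pairs $(k_a^{(t)},k_b^{(t)})$ and facing a four-fold Gaussian, the cleaner route (and the one taken in \cite{Bauer}) is to compute the Berezin transform $\sp{T_{f^{(t)}}^{(t)}T_{g^{(t)}}^{(t)}k_z^{(t)}}{k_z^{(t)}}$ directly. Using the reproducing property once reduces this to a single double integral over $\C^n\times\C^n$, which after completing the square is exactly $(f^{(t)}\sharp_t g^{(t)})^{(t)}(z)$; injectivity of the Berezin transform then finishes. Your proposed pairing approach would also work, but is combinatorially heavier than necessary.
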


For the proof of Theorem \ref{mthm} we need two lemmas to expand $f^{(t)}$, $g^{(t)}$ and $f^{(t)} \sharp_t g^{(t)}$ in terms of $f$, $g$ and their derivatives.

\begin{lem} \label{lem1}
Let $k \geq 1$ and $f \in C^{2k}_{buc}(\C^n)$. Then we have the following expansion:
\begin{equation} \label{heat_expansion}
f^{(t)}(z) = \sum\limits_{\substack{\alpha \in \N_0^n \\ \abs{\alpha} \leq k}} \frac{t^{\abs{\alpha}}}{\alpha !}\partial^{\alpha}\bar{\partial}^{\alpha}f(z) + o(t^k)
\end{equation}
uniformly for all $z \in \C^n$ as $t \to 0$.
\end{lem}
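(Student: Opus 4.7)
The plan is to compute $f^{(t)}(z)$ directly from its translated integral representation
\[
f^{(t)}(z) = \frac{1}{\pi^n} \int_{\C^n} f(z + \sqrt{t}\,w)\, e^{-|w|^2}\, \mathrm{d}w,
\]
by Taylor-expanding the integrand in complex coordinates and exploiting the rotational symmetry of the Gaussian weight. Since $f \in C^{2k}_b(\C^n)$, Taylor's theorem in the independent variables $z, \bar z$ yields
\[
f(z + h) = \sum_{|\beta|+|\gamma| \leq 2k} \frac{h^\beta \bar h^\gamma}{\beta!\,\gamma!}\, \partial^\beta \bar\partial^\gamma f(z) + E(z, h),
\]
with the integral-form remainder
\[
E(z, h) = 2k \sum_{|\beta|+|\gamma|=2k} \frac{h^\beta \bar h^\gamma}{\beta!\,\gamma!} \int_0^1 (1-s)^{2k-1} \bigl[\partial^\beta \bar\partial^\gamma f(z+sh) - \partial^\beta \bar\partial^\gamma f(z)\bigr]\, \mathrm{d}s.
\]

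Substituting $h = \sqrt{t}\,w$ and integrating the polynomial part term-by-term against $\pi^{-n}e^{-|w|^2}$, the central bookkeeping tool is the elementary complex-Gaussian moment identity
\[
\frac{1}{\pi^n} \int_{\C^n} w^\beta \bar w^\gamma\, e^{-|w|^2}\, \mathrm{d}w = \beta!\, \delta_{\beta, \gamma},
\]
which annihilates every ``unbalanced'' term ($\beta \neq \gamma$). A short check then collapses the surviving contribution to exactly $\sum_{|\alpha| \leq k} \frac{t^{|\alpha|}}{\alpha!} \partial^\alpha \bar\partial^\alpha f(z)$, matching the claimed expansion.

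The main obstacle is to control the integrated remainder by $o(t^k)$ \emph{uniformly} in $z$. Because $E(z, \sqrt{t}w)$ carries an explicit $t^k$ prefactor, the task reduces to showing that
\[
\frac{1}{\pi^n} \int_{\C^n} w^\beta \bar w^\gamma \bigl[\partial^\beta \bar\partial^\gamma f(z + s\sqrt{t}\,w) - \partial^\beta \bar\partial^\gamma f(z)\bigr] e^{-|w|^2}\, \mathrm{d}w \longrightarrow 0
\]
as $t \to 0$, uniformly in $z \in \C^n$ and $s \in [0, 1]$, for every $\beta,\gamma$ with $|\beta|+|\gamma|=2k$. I would split the $w$-integration at $|w| = t^{-1/4}$. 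On the tail $|w| > t^{-1/4}$ the uniform boundedness of the $(2k)$-th derivatives combined with the superpolynomial decay of $|w|^{2k} e^{-|w|^2}$ makes the contribution superpolynomially small in $t$. On the bulk $|w| \leq t^{-1/4}$ the shift $s\sqrt{t}w$ has norm at most $t^{1/4} \to 0$, so (uniform) continuity of $\partial^\beta \bar\partial^\gamma f$ together with a dominated convergence argument—using the integrable majorant $2\|\partial^\beta \bar\partial^\gamma f\|_\infty\,|w|^{2k} e^{-|w|^2}$—delivers the required $o(1)$. Multiplying by the $t^k$ prefactor produces the uniform $o(t^k)$ bound and completes the proof.
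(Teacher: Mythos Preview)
Your proposal is correct and follows essentially the same route as the paper: Taylor expand $f(z+\sqrt{t}\,w)$ to order $2k$ in complex coordinates, kill the off-diagonal terms via the Gaussian moment identity $\pi^{-n}\int_{\C^n} w^\beta\bar w^\gamma e^{-|w|^2}\,\mathrm{d}w = \beta!\,\delta_{\beta,\gamma}$, and control the order-$2k$ remainder using boundedness of the top derivatives together with dominated convergence. Your explicit integral-form remainder and the $|w|=t^{-1/4}$ splitting are a slightly more hands-on repackaging of the paper's Peano-remainder-plus-Lagrange-bound-plus-dominated-convergence step, but the underlying argument is the same.
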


This expansion is sort of ``obvious'' if we think of $f^{(\cdot)}$ as the solution of the heat equation
\[\partial_tf^{(\cdot)} = \Delta f^{(\cdot)} := \sum\limits_{k = 1}^n \partial_k\bar{\partial}_kf^{(\cdot)}\]
with initial condition $f^{(0)} = f$, i.e.~$f^{(t)} = e^{t\Delta}f$. Nevertheless, we provide a proof for completeness:

\begin{proof}
Taylor expanding $f(\sqrt{t}w+z)$ around $z$ yields
\begin{equation} \label{Taylor_expansion}
f(\sqrt{t}w+z) = \sum\limits_{\abs{\alpha}+\abs{\beta} \leq 2k} \frac{1}{\alpha !\beta !} \partial^{\alpha}\bar{\partial}^{\beta}f(z)w^{\alpha}\bar{w}^{\beta}t^{\frac{\abs{\alpha}+\abs{\beta}}{2}} + \sum\limits_{\abs{\alpha}+\abs{\beta} = 2k} h_{\alpha,\beta}(\sqrt{t}w+z)w^{\alpha}\bar{w}^{\beta}t^k
\end{equation}
with $h_{\alpha,\beta}(\sqrt{t}w+z) \to 0$ as $t \to 0$. Here, the remainders $h_{\alpha,\beta}(\sqrt{t}w+z)$ in Lagrange form are
\[h_{\alpha,\beta}(\sqrt{t}w+z) = \frac{1}{\alpha !\beta !} \partial^{\alpha}\bar{\partial}^{\beta}\left(f(z + \xi\sqrt{t}w) - f(z)\right)\]
for some $\xi \in (0,1)$. As all derivatives of $f$ are assumed to be bounded and uniformly continuous, the remainders are bounded and the convergence $h_{\alpha,\beta}(\sqrt{t}w+z) \to 0$ is uniform in $z$. The expansion for $f^{(t)}$ now follows by a direct calculation:
\begin{align*}
f^{(t)}(z) &= \frac{1}{\pi^n} \int_{\C^n} f(\sqrt{t}w + z)e^{-\abs{w}^2} \, \mathrm{d}w\\
&= \sum\limits_{\abs{\alpha}+\abs{\beta} \leq 2k} \frac{1}{\alpha !\beta !} \partial^{\alpha}\bar{\partial}^{\beta}f(z) t^{\frac{\abs{\alpha}+\abs{\beta}}{2}} \frac{1}{\pi^n} \int_{\C^n} w^{\alpha}\bar{w}^{\beta} e^{-\abs{w}^2} \, \mathrm{d}w + o(t^k)\\
&= \sum\limits_{\abs{\alpha}+\abs{\beta} \leq 2k} \frac{1}{\alpha !\beta !} \partial^{\alpha}\bar{\partial}^{\beta}f(z) t^{\frac{\abs{\alpha}+\abs{\beta}}{2}} \beta !\delta_{\alpha,\beta} + o(t^k)\\
&= \sum\limits_{\abs{\alpha} \leq k} \frac{t^{\abs{\alpha}}}{\alpha !}\partial^{\alpha}\bar{\partial}^{\alpha}f(z) + o(t^k),
\end{align*}
where we used dominated convergence for the remainder and the fact that $\set{e_{\alpha}(w) = \frac{w^{\alpha}}{\alpha !} : \alpha \in \N_0^n}$ forms an orthonormal basis of $H^2_1$.
\end{proof}

\begin{lem} \label{lem2}
Let $k \geq 0$ and $f,g \in C^{2k}_{buc}(\C^n)$. Then we have the following expansion:
\[[f^{(t)} \sharp_t g^{(t)}](z) = \sum\limits_{\abs{\mu}+\abs{\nu}+\abs{\lambda} \leq k} (-1)^{\abs{\lambda}}\frac{t^{\abs{\mu} + \abs{\nu} + \abs{\lambda}}}{\mu ! \nu ! \lambda !}\left(\partial^{\mu+\lambda}\bar{\partial}^{\mu}f(z)\right)\left(\partial^{\nu}\bar{\partial}^{\nu+\lambda}g(z)\right) + o(t^k)\]
uniformly for all $z \in \C^n$ as $t \to 0$.
\end{lem}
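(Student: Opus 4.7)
The plan is to apply Proposition~\ref{thm_BaCo} to express $[f^{(t)}\sharp_t g^{(t)}](z)$ as an explicit double Gaussian integral, and then mimic the proof of Lemma~\ref{lem1} by simultaneously Taylor-expanding $f(\sqrt{t}v+z)$ and $g(\sqrt{t}w+z)$ around $z$ using the expansion~\eqref{Taylor_expansion}.

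Plugging the two expansions into the integral representation of $[f^{(t)}\sharp_t g^{(t)}](z)$ and using linearity, the polynomial main parts reduce, up to combinatorial prefactors $\frac{t^\tau}{\alpha_1!\beta_1!\alpha_2!\beta_2!}\partial^{\alpha_1}\bar\partial^{\beta_1}f(z)\,\partial^{\alpha_2}\bar\partial^{\beta_2}g(z)$ with $\tau=\tfrac12(\abs{\alpha_1}+\abs{\beta_1}+\abs{\alpha_2}+\abs{\beta_2})$, to computing the coupled integrals
\[I_{\alpha_1,\beta_1,\alpha_2,\beta_2} := \frac{1}{\pi^{2n}}\int_{\C^n}\int_{\C^n} v^{\alpha_1}\bar{v}^{\beta_1}w^{\alpha_2}\bar{w}^{\beta_2}e^{-\bar{v}w-\abs{v}^2-\abs{w}^2}\,\mathrm{d}v\,\mathrm{d}w.\]
To evaluate $I_{\alpha_1,\beta_1,\alpha_2,\beta_2}$ I would expand the coupling factor as $e^{-\bar{v}w}=\sum_\lambda\frac{(-1)^{\abs\lambda}}{\lambda!}\bar{v}^\lambda w^\lambda$ and invoke Fock orthogonality $\frac{1}{\pi^n}\int z^\alpha\bar{z}^\beta e^{-\abs{z}^2}\,\mathrm{d}z=\alpha!\,\delta_{\alpha,\beta}$ in $v$ and $w$ separately. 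The only surviving term corresponds to $\lambda=\alpha_1-\beta_1=\beta_2-\alpha_2\in\N_0^n$, giving $I_{\alpha_1,\beta_1,\alpha_2,\beta_2}=(-1)^{\abs\lambda}\alpha_1!\beta_2!/\lambda!$.

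Reindexing via $\mu:=\beta_1$, $\nu:=\alpha_2$ and $\lambda:=\alpha_1-\beta_1$ then collapses the factorials to $(-1)^{\abs\lambda}/(\mu!\nu!\lambda!)$ and converts the power of $t$ into $\abs\mu+\abs\nu+\abs\lambda$, producing exactly the summand claimed in the lemma. The tail terms with $k<\abs\mu+\abs\nu+\abs\lambda\leq 2k$ (arising because the Taylor expansions were taken to order $2k$) are uniformly $O(t^{k+1})=o(t^k)$, since all participating derivatives of $f$ and $g$ are bounded.

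The main obstacle is handling the cross and double remainder contributions produced when the Taylor remainders from~\eqref{Taylor_expansion} in $f$ or $g$ are paired against each other or against the polynomial parts. Here the elementary bound $\abs{e^{-\bar{v}w-\abs{v}^2-\abs{w}^2}}\leq e^{-(\abs{v}^2+\abs{w}^2)/2}$, which follows from $\Re(\bar{v}w)\geq-\tfrac12(\abs{v}^2+\abs{w}^2)$, supplies a Gaussian envelope dominating any polynomial factor in $(v,w)$. Since the remainder coefficients $h_{\alpha,\beta}(\sqrt{t}v+z)$ are uniformly bounded in $(v,z,t)$ and tend to $0$ as $t\to 0$, dominated convergence --- exactly as in the proof of Lemma~\ref{lem1} --- forces the associated double integrals to be $o(1)$ uniformly in $z$, yielding the required uniform $o(t^k)$ error.
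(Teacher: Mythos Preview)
Your proof is correct and follows the same overall strategy as the paper: insert the Taylor expansion \eqref{Taylor_expansion} for both $f$ and $g$ into the integral representation of Proposition~\ref{thm_BaCo}, evaluate the resulting Gaussian moment integrals, and reindex. The only substantive difference is in how the double integral $I_{\alpha_1,\beta_1,\alpha_2,\beta_2}$ is evaluated. The paper does this by a brute-force polar-coordinate computation together with Cauchy's integral formula (and, in Remark~\ref{remark}, offers a second route via the exact sharp product of monomials); your expansion of the coupling factor $e^{-\bar v w}$ as a power series followed by two applications of Fock orthogonality is a third, arguably more transparent, way to the same answer $(-1)^{|\lambda|}\alpha_1!\beta_2!/\lambda!$. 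Your explicit Gaussian envelope $|e^{-\bar v w-|v|^2-|w|^2}|\le e^{-(|v|^2+|w|^2)/2}$ is also a useful addition, since the paper simply defers to ``the same arguments as in Lemma~\ref{lem1}'' even though the presence of the coupling factor requires precisely this extra observation. One minor inaccuracy: after reindexing, the surviving tail terms are constrained by $2|\mu|+|\lambda|\le 2k$ and $2|\nu|+|\lambda|\le 2k$ (from the separate Taylor orders of $f$ and $g$), not by $|\mu|+|\nu|+|\lambda|\le 2k$; this does not affect the argument, since any such term with $|\mu|+|\nu|+|\lambda|\ge k+1$ still contributes $O(t^{k+1})$ with bounded derivative coefficients.
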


Note that even though plugging \eqref{heat_expansion} directly into \eqref{sharp_product} formally yields the right result, it does not sufficiently take the error term into account. We therefore Taylor expand the integral form of the expression (Proposition \ref{thm_BaCo}) again.

\begin{proof}
Using the Taylor expansion \eqref{Taylor_expansion} for $f$ and $g$ and the same arguments as in Lemma \ref{lem1}, we get
\begin{align*}
[f^{(t)} \sharp_t g^{(t)}](z) &= \frac{1}{\pi^{2n}} \int_{\C^n} \int_{\C^n} f(\sqrt{t}v+z)g(\sqrt{t}w+z)e^{-\bar{v}w-\abs{v}^2-\abs{w}^2} \, \mathrm{d}v \, \mathrm{d}w\\
&= \frac{1}{\pi^{2n}} \sum\limits_{\abs{\alpha}+\abs{\beta}+\abs{\gamma}+\abs{\eps} \leq 2k} \frac{1}{\alpha !\beta !\gamma !\eps !} \left(\partial^{\alpha}\bar{\partial}^{\beta}f(z)\right)\left(\partial^{\gamma}\bar{\partial}^{\eps}g(z)\right) t^{\frac{\abs{\alpha}+\abs{\beta}+\abs{\gamma}+\abs{\eps}}{2}}\\
&\qquad \cdot \int_{\C^n}\int_{\C^n} v^{\alpha}\bar{v}^{\beta}w^{\gamma}\bar{w}^{\eps}e^{-\bar{v}w-\abs{v}^2-\abs{w}^2} \, \mathrm{d}v \, \mathrm{d}w + o(t^k).
\end{align*}
Using polar coordinates and contour integration, the double integral is equal to
\[\prod\limits_{j = 1}^n (-1)\int_0^{\infty} \int_0^{\infty} \int_{\abs{w_j} = 1} \int_{\abs{v_j} = 1} r_j^{\alpha_j+\beta_j+1}s_j^{\gamma_j+\eps_j+1}v_j^{\alpha_j-\beta_j-1}w_j^{\gamma_j-\eps_j-1}e^{-r_js_j\frac{w_j}{v_j} - r_j^2 - s_j^2} \, \mathrm{d}v_j \, \mathrm{d}w_j \, \mathrm{d}r_j \, \mathrm{d}s_j.\]
By Cauchy's integral formula, this is $0$ if $\alpha_j < \beta_j$ or $\gamma_j > \eps_j$ for any $j \in \set{1, \ldots, n}$ and equal to
\begin{align*}
&\prod\limits_{j = 1}^n \frac{2\pi i(-1)^{\alpha_j-\beta_j+1}}{(\alpha_j-\beta_j)!}\int_0^{\infty} \int_0^{\infty} \int_{\abs{w_j} = 1} r_j^{2\alpha_j+1}s_j^{\alpha_j-\beta_j+\gamma_j+\eps_j+1}w_j^{\alpha_j-\beta_j+\gamma_j-\eps_j-1} e^{- r_j^2 - s_j^2} \, \mathrm{d}w_j \, \mathrm{d}r_j \, \mathrm{d}s_j\\
&= \prod\limits_{j = 1}^n \frac{4\pi^2(-1)^{\alpha_j - \beta_j}}{(\alpha_j-\beta_j)!} \delta_{\alpha_j - \beta_j,\eps_j - \gamma_j} \int_0^{\infty} \int_0^{\infty} r_j^{2\alpha_j+1}s_j^{\alpha_j-\beta_j+\gamma_j+\eps_j+1}e^{- r_j^2 - s_j^2} \, \mathrm{d}r_j \, \mathrm{d}s_j\\
&= \prod\limits_{j = 1}^n \frac{4\pi^2(-1)^{\alpha_j - \beta_j}}{(\alpha_j-\beta_j)!} \delta_{\alpha_j - \beta_j,\eps_j - \gamma_j} \int_0^{\infty} \int_0^{\infty} r_j^{2\alpha_j+1}s_j^{2\eps_j+1}e^{- r_j^2 - s_j^2} \, \mathrm{d}r_j \, \mathrm{d}s_j\\
&= \prod\limits_{j = 1}^n \frac{\pi^2(-1)^{\alpha_j - \beta_j}\alpha_j !\eps_j !}{(\alpha_j-\beta_j)!} \delta_{\alpha_j - \beta_j,\eps_j - \gamma_j}\\
&= \pi^{2n}(-1)^{\abs{\alpha}-\abs{\beta}}\frac{\alpha !\eps !}{(\alpha-\beta)!}\delta_{\alpha - \beta,\eps - \gamma}
\end{align*}
otherwise. Choosing $\mu = \beta$, $\nu = \gamma$ and $\lambda = \alpha-\beta = \eps - \gamma$, we get
\[[f^{(t)} \sharp_t g^{(t)}](z) = \sum\limits_{\abs{\mu}+\abs{\nu}+\abs{\lambda} \leq k} (-1)^{\abs{\lambda}}\frac{t^{\abs{\mu} + \abs{\nu} + \abs{\lambda}}}{\mu ! \nu ! \lambda !}\left(\partial^{\mu+\lambda}\bar{\partial}^{\mu}f(z)\right)\left(\partial^{\nu}\bar{\partial}^{\nu+\lambda}g(z)\right) + o(t^k)\]
as asserted.
\end{proof}

\begin{rem} \label{remark}
In fact, Proposition \ref{thm_BaCo} is also valid for polynomials (see \cite[Lemma 14]{Bauer}). Therefore, instead of computing the double integral by brute force, we could observe that it is equal to $\pi^{2n}[(z^{\alpha}\bar{z}^{\beta})^{(1)} \sharp_1 (z^{\gamma}\bar{z}^{\eps})^{(1)}](0)$. Then, using the exact formula for the heat transform of a polynomial (i.e.~an exact version of Lemma \ref{lem1}) and the definition of $\sharp_1$ (Definition \ref{definition}), yields
\begin{align*}
[(z^{\alpha}\bar{z}^{\beta})^{(1)} \sharp_{(1)} (z^{\gamma}\bar{z}^{\eps})^{(1)}](0) &= \sum\limits_{\lambda \in \N_0^n}\sum\limits_{\mu \in \N_0^n}\sum\limits_{\nu \in \N_0^n} \left. \frac{(-1)^{\abs{\lambda}}}{\lambda !\mu ! \nu !}(\partial^{\lambda}\partial^{\mu}\bar{\partial}^{\mu}z^{\alpha}\bar{z}^{\beta})(\partial^{\nu}\bar{\partial}^{\lambda}\bar{\partial}^{\nu}z^{\gamma}\bar{z}^{\eps})\right|_{z = 0}\\
&= \sum\limits_{\lambda \in \N_0^n}\sum\limits_{\mu \in \N_0^n}\sum\limits_{\nu \in \N_0^n} \frac{(-1)^{\abs{\lambda}}}{\lambda !\mu ! \nu !} \alpha !\beta ! \gamma ! \eps ! \delta_{\alpha,\mu+\lambda}\delta_{\beta,\mu}\delta_{\gamma,\nu}\delta_{\eps,\nu+\lambda}\\
&= (-1)^{\abs{\alpha}-\abs{\beta}}\frac{\alpha !\eps !}{(\alpha-\beta)!}\delta_{\alpha - \beta,\eps - \gamma}
\end{align*}
directly.
\end{rem}

Now we are able to prove Theorem \ref{mthm} by strong induction over $k$. The case $k = 0$ is the following result of Bauer and Coburn:

\begin{prop} \label{thm_BaCoHa}
(\cite[Theorem A]{BaCo})\\
If $f,g \from \C^n \to \C$ are bounded and uniformly continuous, then
\[\lim\limits_{t \to 0} \norm{T_f^{(t)}T_g^{(t)} - T_{fg}^{(t)}} = 0.\]
\end{prop}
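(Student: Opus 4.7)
The plan is to reduce, via approximation of the uniformly continuous factor by a smooth one, to a Fock-space Hankel-operator estimate with an explicit $\sqrt{t}$-decay. By taking adjoints (the semi-commutator satisfies $(T_f^{(t)}T_g^{(t)} - T_{fg}^{(t)})^* = T_{\bar g}^{(t)}T_{\bar f}^{(t)} - T_{\overline{fg}}^{(t)}$) I may assume without loss of generality that $g$ is uniformly continuous and $f$ is merely bounded. Writing $P = P^{(t)}$ and $Q = I - P$, the basic algebraic identity
\[
T_f^{(t)}T_g^{(t)} - T_{fg}^{(t)} = -P M_f Q M_g P = -H_{\bar f}^* H_g,
\]
where $H_\phi := Q M_\phi\big|_{H^2_t}$ is the Hankel operator with symbol $\phi$, reduces the problem to controlling $\norm{H_{\bar f}}$ and $\norm{H_g}$. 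Since $\norm{H_{\bar f}} \leq \norm{f}_\infty$ trivially, everything hinges on a good bound for $\norm{H_g}$.

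The key quantitative input is the Hankel bound
\[
\norm{H_\phi^{(t)}} \leq \sqrt{t}\, \norm{\bar\partial \phi}_\infty \qquad \text{for every } \phi \in C^1_b(\C^n).
\]
I would establish this by H\"ormander's $L^2$-estimate for $\bar\partial$ with the Gaussian weight $e^{-\abs{z}^2/t}$: for $h \in H^2_t$, the function $H_\phi h = (I-P)(\phi h)$ is the $L^2_t$-minimal solution of $\bar\partial u = \bar\partial(\phi h) = (\bar\partial\phi)h$, and since the complex Hessian of the plurisubharmonic weight $\abs{z}^2/t$ equals $\tfrac{1}{t}$ times the standard K\"ahler form, H\"ormander's inequality yields $\norm{H_\phi h}_t^2 \leq t\norm{(\bar\partial\phi)h}_t^2 \leq t\norm{\bar\partial\phi}_\infty^2\norm{h}_t^2$. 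An elementary alternative that avoids $\bar\partial$-machinery is a Schur test against the reproducing kernel $K^{(t)}(z,w) = e^{\sp{z}{w}/t}$, whose off-diagonal part concentrates within scale $\sqrt{t}$; the cancellation that makes only $\bar\partial\phi$ (and not the full gradient of $\phi$) appear comes from the fact that $Q$ annihilates every holomorphic symbol.

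With the Hankel bound in hand, the proof concludes by a standard $\epsilon$-argument. Fix $\epsilon > 0$ and, using uniform continuity of $g$, mollify against a smooth compactly supported bump at scale $\eta$ to obtain $g_\eta \in C^\infty_b(\C^n)$ with $\norm{g - g_\eta}_\infty < \epsilon$. The triangle inequality gives
\[
\norm{T_f^{(t)} T_g^{(t)} - T_{fg}^{(t)}} \leq \norm{T_f^{(t)}}\norm{T_{g-g_\eta}^{(t)}} + \norm{T_f^{(t)}T_{g_\eta}^{(t)} - T_{fg_\eta}^{(t)}} + \norm{T_{f(g_\eta - g)}^{(t)}} \leq 2\norm{f}_\infty \epsilon + \norm{f}_\infty \norm{H_{g_\eta}^{(t)}},
\]
and for each \emph{fixed} $\eta$, the Hankel estimate applied to $g_\eta$ yields $\norm{H_{g_\eta}^{(t)}} \leq \sqrt{t}\,\norm{\bar\partial g_\eta}_\infty \to 0$ as $t \to 0$ (the constant $\norm{\bar\partial g_\eta}_\infty$ is finite because $g$ is bounded, though it may blow up as $\eta \to 0$). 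Hence $\limsup_{t \to 0} \norm{T_f^{(t)}T_g^{(t)} - T_{fg}^{(t)}} \leq 2\norm{f}_\infty \epsilon$, and sending $\epsilon \to 0$ completes the argument.

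The main obstacle is the Hankel estimate itself. The H\"ormander route is short but imports weighted $\bar\partial$-machinery from several complex variables; the Schur-test alternative is more elementary but requires carefully exploiting that $Q$ kills holomorphic functions in order to get $\bar\partial\phi$ rather than the full gradient of $\phi$ on the right-hand side. A secondary subtlety, which the statement as written permits, is the case of uniformly continuous but unbounded $g$, where $T_g^{(t)}$ itself is unbounded; this must be handled by exploiting that the semi-commutator is automatically bounded, together with the observation that $g - g_\eta$ remains bounded even when $g$ is not.
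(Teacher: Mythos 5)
The paper does not actually prove this proposition: it is imported as a black box from \cite[Theorem 3.4]{BaCoHa}, so there is no in-paper argument to compare against, and your proposal is a genuinely independent proof. As a strategy it is sound. The adjoint reduction, the Hankel factorization $T_f^{(t)}T_g^{(t)} - T_{fg}^{(t)} = -H_{\bar f}^*H_g$ with the trivial bound $\norm{H_{\bar f}}\leq\norm{f}_\infty$, the H\"ormander estimate $\norm{H_\phi^{(t)}}\leq\sqrt{t}\,\norm{\bar\partial\phi}_\infty$ (your identification of $(I-P^{(t)})(\phi h)$ as the minimal $L^2_t$-solution of $\bar\partial u=(\bar\partial\phi)h$ is correct, and the complex Hessian of $\abs{z}^2/t$ does produce exactly the factor $t$), and the mollification/$\epsilon$-argument fit together into a complete proof; this is essentially the classical Berger--Coburn route. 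What the cited proof buys that yours does not is generality: the argument in \cite{BaCoHa} is organized around the Berezin transform and mean oscillation, which is how the hypothesis gets weakened from uniform continuity to bounded $\VMO$ together with a maximality statement. Your mollification step uses uniform continuity irreducibly, so you recover exactly the statement as quoted but not the sharper version --- which is all this paper needs.

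Two small repairs. First, your justification that $\norm{\bar\partial g_\eta}_\infty<\infty$ ``because $g$ is bounded'' does not cover the case you yourself flag, namely $g$ uniformly continuous but unbounded, which the statement permits. The correct reason is that $\int\bar\partial\rho_\eta=0$, hence $\bar\partial g_\eta(z)=\int\bigl(g(z-w)-g(z)\bigr)\bar\partial\rho_\eta(w)\,\mathrm{d}w$, which uniform continuity bounds by $\omega_g(\eta)\norm{\bar\partial\rho_\eta}_{L^1}$; this also guarantees that the Hankel estimate applies to the possibly unbounded $g_\eta$ on a dense domain. Second, the ``elementary Schur-test alternative'' is only a stated hope, not an argument; the proof stands or falls with the H\"ormander estimate, which is standard and fine, but you should commit to it rather than present it as optional.
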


Now the rest is just bookkeeping.

\begin{proof}[Proof of Theorem \ref{mthm}]
We will prove
\begin{equation} \label{Induction_Hypothesis}
T_f^{(t)}T_g^{(t)} = \sum\limits_{\abs{\alpha} \leq l} \frac{(-t)^{\abs{\alpha}}}{\alpha !} T_{(\partial^{\alpha}f)(\bar{\partial}^{\alpha}g)}^{(t)} + o(t^l)
\end{equation}
for all $f,g \in C^{2l}_{buc}(\C^n)$ by strong induction. For $l = 0$ we have Proposition \ref{thm_BaCoHa}. So assume that \eqref{Induction_Hypothesis} holds for $l \in \set{0, \ldots, k-1}$. We will compare the expansions of $T_{f^{(t)}}^{(t)}T_{g^{(t)}}^{(t)}$ and $T_{f^{(t)} \sharp_t g^{(t)}}^{(t)}$. Using the induction hypothesis and Lemma \ref{lem1}, we get
\begin{align*}
T_{f^{(t)}}^{(t)}T_{g^{(t)}}^{(t)} &= \sum\limits_{\abs{\alpha}+\abs{\beta} \leq k} \frac{t^{\abs{\alpha}+\abs{\beta}}}{\alpha !\beta !} T_{\partial^{\alpha}\bar{\partial}^{\alpha}f}^{(t)}T_{\partial^{\beta}\bar{\partial}^{\beta}g}^{(t)} + o(t^k)\\
&= \sum\limits_{\abs{\alpha}+\abs{\beta} = k} \frac{t^k}{\alpha !\beta !} T_{\partial^{\alpha}\bar{\partial}^{\alpha}f}^{(t)}T_{\partial^{\beta}\bar{\partial}^{\beta}g}^{(t)} + \sum\limits_{\abs{\alpha}+\abs{\beta} = k-1} \frac{t^{k-1}}{\alpha !\beta !} T_{\partial^{\alpha}\bar{\partial}^{\alpha}f}^{(t)}T_{\partial^{\beta}\bar{\partial}^{\beta}g}^{(t)} + \ldots\\
&\qquad + \sum\limits_{\abs{\alpha}+\abs{\beta} = 1} \frac{t}{\alpha !\beta !} T_{\partial^{\alpha}\bar{\partial}^{\alpha}f}^{(t)}T_{\partial^{\beta}\bar{\partial}^{\beta}g}^{(t)} + T_f^{(t)}T_g^{(t)} + o(t^k)\\
&= \sum\limits_{\abs{\alpha}+\abs{\beta} = k} \frac{t^k}{\alpha !\beta !} T_{(\partial^{\alpha}\bar{\partial}^{\alpha}f)(\partial^{\beta}\bar{\partial}^{\beta}g)}^{(t)} + \sum\limits_{\abs{\alpha}+\abs{\beta} = k-1} \frac{t^{k-1}}{\alpha !\beta !} \sum\limits_{\abs{\gamma} \leq 1} \frac{(-t)^{\abs{\gamma}}}{\gamma !} T_{(\partial^{\alpha+\gamma}\bar{\partial}^{\alpha}f)(\partial^{\beta}\bar{\partial}^{\beta+\gamma}g)}^{(t)} + \ldots\\
&\qquad + \sum\limits_{\abs{\alpha}+\abs{\beta} = 1} \frac{t}{\alpha !\beta !} \sum\limits_{\abs{\gamma} \leq k-1} \frac{(-t)^{\abs{\gamma}}}{\gamma !} T_{(\partial^{\alpha+\gamma}\bar{\partial}^{\alpha}f)(\partial^{\beta}\bar{\partial}^{\beta+\gamma}g)}^{(t)} + T_f^{(t)}T_g^{(t)} + o(t^k)\\
&= \sum\limits_{\substack{\abs{\alpha}+\abs{\beta}+\abs{\gamma} \leq k \\ \abs{\alpha} + \abs{\beta} \neq 0}} (-1)^{\abs{\gamma}}\frac{t^{\abs{\alpha}+\abs{\beta}+\abs{\gamma}}}{\alpha !\beta !\gamma !} T_{(\partial^{\alpha+\gamma}\bar{\partial}^{\alpha}f)(\partial^{\beta}\bar{\partial}^{\beta+\gamma}g)}^{(t)} + T_f^{(t)}T_g^{(t)} + o(t^k).
\end{align*}
Hence, using Proposition \ref{thm_BaCo} and comparing with Lemma \ref{lem2}, we get
\[T_f^{(t)}T_g^{(t)} = \sum\limits_{\abs{\gamma} \leq k} (-1)^{\abs{\gamma}}\frac{t^{\abs{\gamma}}}{\gamma !} T_{(\partial^{\gamma}f)(\bar{\partial}^{\gamma}g)}^{(t)} + o(t^k)\]
and we are done.
\end{proof}

\section{Proof of Theorem 3}

Recall that we want to prove
\[\sum\limits_{\abs{\alpha} \leq k} \frac{(-t)^{\abs{\alpha}}}{\alpha !} \left((\partial^{\alpha}f)(\bar{\partial}^{\alpha}g)\right)^{(t)} = \sum\limits_{\abs{\alpha} \leq k} \frac{t^{\abs{\alpha}}}{\alpha !} (\bar{\partial}^{\alpha}f^{(t)})(\partial^{\alpha}g^{(t)}) + o(t^k)\]
as $t \to 0$. We will need the following combinatorial identity.

\begin{lem} \label{lem3}
Let $q \geq p \geq 0$ and $l \geq 0$. Then
\[\sum\limits_{m = 0}^p (-1)^m \frac{(q-m+l)!}{m!(p-m)!(q-m)!} = \begin{cases} 0 & \text{if } l < p, \\ \frac{l!(l+q-p)!}{q!p!(l-p)!} & \text{if } l \geq p. \end{cases}\]
\end{lem}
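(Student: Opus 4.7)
The plan is to recognize this as a standard alternating binomial sum that evaluates cleanly via generating functions. First I would pull out common factors to write the summand in terms of binomial coefficients. Since $\binom{p}{m} = p!/(m!(p-m)!)$ and $\binom{q-m+l}{l} = (q-m+l)!/(l!(q-m)!)$, the sum becomes
\[
\sum_{m=0}^p (-1)^m \frac{(q-m+l)!}{m!(p-m)!(q-m)!} \;=\; \frac{l!}{p!}\,S, \qquad S := \sum_{m=0}^p (-1)^m \binom{p}{m}\binom{q-m+l}{l}.
\]
So everything reduces to evaluating $S$.

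The key step is to use the generating-function identity $\binom{q-m+l}{l} = [x^l](1-x)^{-(q-m+1)}$. Exchanging sum and coefficient extraction gives
\[
S \;=\; [x^l]\,(1-x)^{-(q+1)} \sum_{m=0}^p \binom{p}{m}(-1)^m(1-x)^m \;=\; [x^l]\,(1-x)^{-(q+1)} x^p,
\]
because the $m$-sum is a straight binomial expansion of $(1-(1-x))^p = x^p$. Therefore
\[
S \;=\; [x^{l-p}](1-x)^{-(q+1)} \;=\; \begin{cases} 0 & \text{if } l < p,\\ \binom{q+l-p}{l-p} & \text{if } l \geq p.\end{cases}
\]
Multiplying by $l!/p!$ and rewriting $\binom{q+l-p}{l-p} = (q+l-p)!/((l-p)!\,q!)$ yields exactly the two cases in the statement.

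There is essentially no obstacle here; the only thing to take care of is the convention $\binom{a}{b}=0$ for $b<0$, which covers the case $l<p$ automatically. As an alternative, one could argue more directly: the quantity $P(m) := (q-m+l)!/(q-m)!$ is a polynomial in $m$ of degree $l$, and the classical finite-difference identity $\sum_{m=0}^p(-1)^m\binom{p}{m}P(m) = 0$ for $\deg P < p$ immediately yields the case $l<p$; the case $l\geq p$ could then be handled by induction on $l$ (using Pascal's rule $\binom{q-m+l}{l} = \binom{q-m+l-1}{l} + \binom{q-m+l-1}{l-1}$) if one wishes to avoid generating functions entirely. Either route is short and elementary.
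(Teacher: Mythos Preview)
Your proof is correct. The rewriting as $\frac{l!}{p!}\sum_m(-1)^m\binom{p}{m}\binom{q-m+l}{l}$, the coefficient extraction $\binom{q-m+l}{l}=[x^l](1-x)^{-(q-m+1)}$, and the collapse of the inner sum to $x^p$ all check out, and the final evaluation matches the claimed formula in both cases.

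The paper takes a closely related but slightly different route: it writes the polynomial identity
\[
\sum_{m=0}^p \binom{p}{m}(-1)^m x^{q-m+l}=(x-1)^p x^{q-p+l},
\]
differentiates both sides $l$ times, and evaluates at $x=1$; the Leibniz rule on the right-hand side then immediately gives $0$ for $l<p$ and the stated constant for $l\geq p$. So both arguments hinge on the same binomial collapse $\sum_m\binom{p}{m}(-1)^m y^m=(1-y)^p$, but yours reads off a power-series coefficient at $x=0$ whereas the paper reads off a derivative value at $x=1$. The paper's version is marginally shorter (one line plus an implicit Leibniz computation); your version is more self-contained and makes the $l<p$ vanishing transparent via the coefficient convention. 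Your alternative finite-difference remark for the $l<p$ case is also perfectly valid and essentially amounts to the same differentiation argument.
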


\begin{proof}
Consider the equality
\[\sum\limits_{m = 0}^p \frac{p!}{m!(p-m)!} (-1)^mx^{q-m+l} = (x-1)^px^{q-p+l},\]
differentiate it $l$ times and evaluate at $x = 1$.
\end{proof}

\begin{proof}[Proof of Theorem \ref{mthm2}]
Taylor expanding the left-hand side and using the same arguments as in Lemma \ref{lem1} yields
\begin{align} \label{eq_mthm2}
\sum\limits_{\abs{\alpha} \leq k} \frac{(-t)^{\abs{\alpha}}}{\alpha !} \left((\partial^{\alpha}f)(\bar{\partial}^{\alpha}g)\right)^{(t)} &= \sum\limits_{\abs{\alpha} \leq k} \frac{(-t)^{\abs{\alpha}}}{\alpha !}  \frac{1}{\pi^n} \int_{\C^n} (\partial^{\alpha}f)(\sqrt{t}w + z)(\bar{\partial}^{\alpha}g)(\sqrt{t}w + z)e^{-\abs{w}^2} \, \mathrm{d}w\notag\\
&= \frac{1}{\pi^n} \sum\limits_{\abs{\alpha} \leq k} \frac{(-t)^{\abs{\alpha}}}{\alpha !} \sum\limits_{\abs{\beta}+\abs{\gamma}+\abs{\eps}+\abs{\zeta} \leq 2(k-\abs{\alpha})} \frac{t^{\frac{\abs{\beta}+\abs{\gamma}+\abs{\eps}+\abs{\zeta}}{2}}}{\beta !\gamma !\eps !\zeta !} (\partial^{\alpha+\beta}\bar{\partial}^{\gamma}f)(z)\notag\\
&\qquad \qquad \qquad \qquad \qquad \cdot (\partial^{\eps}\bar{\partial}^{\alpha+\zeta}g)(z) \int_{\C^n} w^{\beta+\eps}\bar{w}^{\gamma+\zeta}e^{-\abs{w}^2} \, \mathrm{d}w + o(t^k)\notag\\
&= \sum\limits_{2\abs{\alpha}+\abs{\beta}+\abs{\gamma}+\abs{\eps}+\abs{\zeta} \leq 2k} (-1)^{\abs{\alpha}}\frac{t^{\frac{2\abs{\alpha}+\abs{\beta}+\abs{\gamma}+\abs{\eps}+\abs{\zeta}}{2}}}{\alpha !\beta !\gamma !\eps !\zeta !}\\
&\qquad \qquad \qquad \quad \cdot (\partial^{\alpha+\beta}\bar{\partial}^{\gamma}f)(z)(\partial^{\eps}\bar{\partial}^{\alpha+\zeta}g)(z) (\gamma+\zeta)!\delta_{\beta+\eps,\gamma+\zeta} + o(t^k).\notag
\end{align}
Fix $j \in \set{1, \ldots, n}$ and $p,q \in \N$ with $q \geq p$. Then Lemma \ref{lem3} implies (with $\alpha_j = m$, $\gamma_j = l$)
\begin{align*}
\sum\limits_{\alpha_j+\beta_j = p}\sum\limits_{\alpha_j+\zeta_j = q} \frac{(-1)^{\alpha_j}(\gamma_j+\zeta_j)!}{\alpha_j!\beta_j!\gamma_j!\eps_j!\zeta_j!}\delta_{\beta_j+\eps_j,\gamma_j+\zeta_j} &= \sum\limits_{m = 0}^p \frac{(-1)^m(\gamma_j+q-m)!}{m!(p-m)!\gamma_j!\eps_j!(q-m)!}\delta_{p-m+\eps_j,\gamma_j+q-m}\\
&= \begin{cases} 0 & \text{if } \gamma_j < p \\ \frac{(\gamma_j+q-p)!}{q!p!(\gamma_j-p)!\eps_j!}\delta_{p+\eps_j,\gamma_j+q} & \text{if } \gamma_j \geq p \end{cases}\\
&= \begin{cases} 0 & \text{if } \gamma_j < p \\ \frac{1}{q!p!(\gamma_j-p)!}\delta_{p+\eps_j,\gamma_j+q} & \text{if } \gamma_j \geq p. \end{cases}
\end{align*}
As the expression on the left-hand side is symmetric with respect to $\beta_j \leftrightarrow \zeta_j$, $\gamma_j \leftrightarrow \eps_j$, $p \leftrightarrow q$, we also obtain
\[\sum\limits_{\alpha_j+\beta_j = p}\sum\limits_{\alpha_j+\zeta_j = q} \frac{(-1)^{\alpha_j}(\gamma_j+\zeta_j)!}{\alpha_j!\beta_j!\gamma_j!\eps_j!\zeta_j!}\delta_{\beta_j+\eps_j,\gamma_j+\zeta_j}  = \begin{cases} 0 & \text{if } \eps_j < q \\ \frac{1}{p!q!(\eps_j-q)!}\delta_{q+\gamma_j,\eps_j+p} & \text{if } \eps_j \geq q \end{cases}\]
for $q \leq p$, which is actually exactly the same as above due to the Kronecker delta. Now the idea is to group the terms in \eqref{eq_mthm2} with fixed $p$ and $q$. In particular, the terms with $\alpha_j+\beta_j > \gamma_j$ or $\alpha_j+\zeta_j > \eps_j$ for some $j \in \set{1, \ldots, n}$ can be removed as they sum to $0$. We therefore set $\rho := \alpha + \beta$, $\sigma := \alpha  +\zeta$, $\mu := \gamma - \alpha - \beta$ and $\nu := \eps - \alpha - \zeta$. This yields
\begin{align*}
\sum\limits_{\abs{\alpha} \leq k} \frac{(-t)^{\abs{\alpha}}}{\alpha !} \left((\partial^{\alpha}f)(\bar{\partial}^{\alpha}g)\right)^{(t)} &= \sum\limits_{2\abs{\rho}+2\abs{\sigma}+\abs{\mu}+\abs{\nu} \leq 2k} \frac{t^{\frac{2\abs{\rho}+2\abs{\sigma}+\abs{\mu}+\abs{\nu}}{2}}}{\rho !\sigma !\mu!} (\partial^{\rho}\bar{\partial}^{\rho+\mu}f)(z) (\partial^{\nu+\sigma}\bar{\partial}^{\sigma}g)(z) \delta_{\mu,\nu}\\
&\qquad \qquad \qquad \qquad \qquad + o(t^k)\\
&= \sum\limits_{\abs{\rho}+\abs{\sigma}+\abs{\mu} \leq k} \frac{t^{\abs{\rho}+\abs{\sigma}+\abs{\mu}}}{\rho !\sigma !\mu!} (\partial^{\rho}\bar{\partial}^{\rho+\mu}f)(z) (\partial^{\mu+\sigma}\bar{\partial}^{\sigma}g)(z) + o(t^k)\\
&= \sum\limits_{\abs{\rho}+\abs{\sigma}+\abs{\mu} \leq k} \frac{t^{\abs{\mu}}}{\mu !}\bar{\partial}^{\mu}\left(\frac{t^{\abs{\rho}}}{\rho !}\partial^{\rho}\bar{\partial}^{\rho}f\right)(z) \partial^{\mu}\left(\frac{t^{\abs{\sigma}}}{\sigma !}\partial^{\sigma}\bar{\partial}^{\sigma}g\right)(z) + o(t^k)\\
&= \sum\limits_{\abs{\mu} \leq k} \frac{t^{\abs{\mu}}}{\mu !} (\bar{\partial}^{\mu}f^{(t)})(\partial^{\mu}g^{(t)}) + o(t^k)
\end{align*}
by Lemma \ref{lem1}.
\end{proof}

\section{Open problems}

Here we summarize some related problems that we were not able to solve here and probably go beyond the methods of this paper. They will be considered in future research.

\paragraph{(1) Find the least number of derivatives needed for the product expansion.} \cite{BaCoHa} suggests that the regularity assumptions in Theorem \ref{mthm} and Corollary \ref{cor} are not quite optimal. However, it is not so clear whether $2k$ derivatives are actually needed. For the case $k = 0$ not even continuity is needed, but the oscillation needs to be constrained (see \cite{BaCoHa}). We conjecture that something similar happens for $k \geq 1$. However, this requires a more sophisticated approach than presented here because even though all the higher derivatives magically disappear in the end, they are needed to bound the error term.

\vspace{-0.3cm}
\paragraph{(2) Find an example of two functions with only one bounded derivative where the statement of Corollary \ref{cor} is wrong.} As we expect that only one derivative (without regularity assumptions on the partial derivatives) is not enough, a counterexample needs to be found.

\vspace{-0.3cm}
\paragraph{(3) Show that Theorem \ref{mthm2} still holds with less than $2k$ derivatives or find a counterexample.} As this identity is on a purely functional level, this might be easier than \textbf{(1)} or \textbf{(2)}.

\vspace{-0.3cm}
\paragraph{(4) Prove a similar result for the Bergman space.} A similar result is probably also true for the Bergman space on the unit ball. However, a different approach is needed because an analogue of Proposition \ref{thm_BaCo} does not exist and in general it appears to be less likely for a product of two Toeplitz operators to be a Toeplitz operator again.

\section*{Acknowledgements} I would like to thank the anonymous referee for pointing out an error in the original manuscript and for some other valuable suggestions. I would also like to thank Wolfram Bauer for pointing out Remark \ref{remark} and other simplifications.

\bigskip

\noindent
Raffael Hagger\\
Institut f\"ur Analysis\\
Leibniz Universit\"at Hannover\\
Welfengarten 1\\
30167 Hannover\\
GERMANY\\
raffael.hagger@math.uni-hannover.de

\end{document}